\newtheorem{theorem}{Theorem}[section]
\newtheorem{lemma}[theorem]{Lemma}
\title{Asymptotics of the Minimal Feedback Arc Set in Erd\H{o}s-R\'{e}nyi 
Graphs}
\author[1*]{Harvey Diamond}
\author[2$\dag$]{Mark Kon}
\author[3]{Louise Raphael}
\affil[1]{Department of Mathematics, West Virginia University, Morgantown, WV 26506}
\affil[2]{Department of Mathematics and Statistics, Boston University}
\affil[3] {Department of Mathematics, Howard University}
\affil[*]{Address correspondence to: diamond@math.wvu.edu}
\affil[$\dag$] {Research partially supported by National Science Foundation grant DMS-23-19011.}
\date{}
\begin{document}

\maketitle

%%%%%% Abstract %%%%%%
\begin{abstract}
Given a directed graph, the Minimal Feedback Arc Set (FAS) problem asks for a minimal set of arcs which, when removed, results in an acyclic graph. Equivalently, the FAS problem asks to find an ordering of the vertices that minimizes the number of feedback arcs. The FAS problem is considered an algorithmic problem of central importance in discrete mathematics. Our purpose in this paper is to consider the problem in the context of Erd\H{o}s-R\'{e}nyi random directed graphs, denoted $D(n,p)$, in which each possible directed arc is included with a fixed probability $p>0$. Our interest is the typical ratio of the number of feedforward arcs to the number of feedback arcs that are removed in the FAS problem.  We show that as the number $n$ of vertices goes to infinity the probability that this ratio is greater than $1+\epsilon$ for any fixed $\epsilon > 0$ approaches zero. Similarly, letting $p$ go to zero as $n\rightarrow \infty$ this result remains true if $p>C\log{n}/n$ where $C$ depends on $\epsilon$.  
\end{abstract}

%%%%%% Main Text %%%%%%

\section{Introduction}
The Minimal Feedback Arc Set (FAS) problem is longstanding in the field of graph theory and its applications. We note in this regard the recent survey by Kudelić [Ku] of prior results over the past 50 years.  
In a directed graph (digraph) $\textbf{D} =(V,E)$ with $V$ the set of vertices and $E$ the set of directed arcs, the FAS problem is to find a minimal set of arcs $E'$ such that removing these arcs produces a digraph $\textbf{D}'=(V,E \setminus E')$ that is acyclic. As noted in the classic text of Harary [Ha], a digraph is acyclic if and only if it is possible to order the vertices of $\textbf{D}$ so that the adjacency matrix $A(\textbf{D})$ of $\textbf{D}$ is upper triangular. The matrix $A(\textbf{D})$ is a $0/1$ matrix where a $1$ in the $(i,j)$ position corresponds to an arc  from vertex $i$ to vertex $j$. Hence a $1$ where $j>i$ above the diagonal indicates an arc from a vertex $i$ to a higher-numbered vertex $j$. A $1$ below the diagonal corresponds to an arc from a higher-numbered vertex to a lower numbered vertex. The latter are called feedback arcs. Our ongoing conceptual framework is that of the adjacency matrix of a digraph. Given $A(\textbf{D})$ for the graph $\textbf{D}=(V,E)$ the FAS problem is to find the minimum number of $1's$ to eliminate from $A(\textbf{D})$ in order that the resulting adjacency matrix can be transformed into an upper triangular matrix by renumbering the vertices. In turn, this is equivalent to finding a renumbering of the vertices that results in a minimal number of $1's$ below the diagonal. These $1's$ identify  the feedback arcs to be eliminated. 

Our graphs will be random directed graphs on $n$ vertices of the Erd\H{o}s-R\'{e}nyi type. The original Erd\H{o}s-R\'{e}nyi formulation involved undirected graphs, denoted as $G(n,p)$. Here we use the obvious extension to directed graphs, denoted here $D(n,p)$, as for instance in [Hef], in which each possible directed arc is included with a fixed probability $p$. Directed graphs themselves will be denoted $\textbf{D}$ (using boldface). Since the location of an arc in the adjacency matrix relative to the diagonal is important, we will introduce $0/1$ Bernoulli random variables $\{\textbf{x}_{ij},j>i;i,j=1,2,...,n\}$ and $\{\textbf{y}_{ij},i>j;i,j,=1,2,...,n\}$ whose values will populate our adjacency matrix above and below the diagonal respectively. These random variables are independent, identically distributed (iid) and equal to $1$ with probability $p$ and equal to $0$ with probability $q=1-p$.

We define $\textbf{X}=\sum{ \textbf{x}_{ij}},j>i$ and $\textbf{Y}=\sum{ \textbf{y}_{ij}},i>j$, 
which are binomial random variables with distribution $B(K,p)$ where $K=(n^2-n)/2$. In short, $\textbf{X}$ is the number of 1's that appear above the diagonal of the adjacency matrix, and $\textbf{Y}$ is the number that appear below. If we have simply an adjacency matrix $A(\textbf{D})$ then we write the entries without boldface so that $\{x_{ij},j>i\},\{y_{ij},i>j\}$ are the scalar $0/1$ entries and X,Y are the scalar totals above and below the diagonal respectively. 

Now we can consider the FAS problem in terms of the adjacency matrix in the following way: Permute the numbering of the vertices in the graph $\textbf{D}=(V,E)$ so that there are as few $1`s$ as possible below the diagonal of the adjacency matrix $A(\textbf{D})$ of $\textbf{D}$. The Feedback Arc Set of the graph is then all of the arcs corresponding to $1's$ below the diagonal. As is standard practice, $1's$ above the diagonal, which represent arcs from vertex $i$ to vertex $j$ with $j>i$, are referred to as ``feedforward arcs" and $1's$ below the diagonal, running backwards in the vertex ordering, are referred to as ``feedback arcs". Thus the FAS problem is to find a renumbering of the vertices that minimizes the number of feedback arcs. 

An important and general question at this point is: What is the typical fraction of arcs contained in the Feedback Arc Set of a given digraph? As above, this will always be asked after an optimal renumbering of the vertices which minimizes the number of such feedback arcs.  In the context of Erd\H{o}s-R\'{e}nyi graphs $D(n,p)$ this is the probabilistic question that we explore in the subsequent sections. In particular we explore conditions under which the minimal feedback arc set  will asymptotically (as $n \rightarrow \infty$) approach one half of all arcs with probability approaching 1. In such cases there is little to gain from solving the FAS problem,  since for any adjacency matrix corresponding to $\textbf{D}=(V,E)$, one could simply remove all the $1's$ lying below the diagonal, or, if their number were smaller, remove the $1's$ above the diagonal, either of which would lead to a trivial solution asymptotically equivalent to fully solving the FAS problem. 

\section{Graphs with a fixed probability $p$ of an arc as $n \rightarrow \infty$}
We introduce next our formulation of the problem for directed Erd\H{o}s-R\'{e}nyi graphs $D(n,p)$.  In this section $p$ is any fixed value satisfying $0<p<1$. In more refined estimates that follow we will assume that $0 < p \le 1/2$. Certainly the cases of interest are those with a relatively small number of arcs relative to the number possible, and so a small value of $p$. 

\par \noindent For $r=1+\epsilon \ge 1$ we define the set $E_r$ of $n \times n$ adjacency matrices $A$ by
\begin{equation}
    E_r=\{A:\frac{X}{Y} \ge r\}.
\end{equation} 
Note: $\frac{X}{Y} \ge r$ is defined to be true if $Y=0$.

\noindent For any given graph with adjacency matrix $A$ we define $A^*$ as the adjacency matrix that results from a solution of the FAS problem, with corresponding numbers $X^*$ and $Y^*$ of ones above and below the diagonal respectively. We then define 
\begin{equation}
E^*_r=\{A^*:\frac{X^*}{Y^*} \ge r\}.
\end{equation}
 In terms of digraphs $E_r$ is the set of digraphs for which the ratio of feedforward to feedback arcs is at least $r$ and $E^*_r$ is the set of optimized digraphs for which the ratio of feedforward to feedback arcs is at least $r$ \textbf{in an FAS solution}. It is clear that $E^*_r\subset E_r$ as the smaller set consists of optimized feedback sets. 

This brings us to our first result as applied to the probabilistic setting of Erd\H{o}s-R\'{e}nyi graphs:
\begin{lemma}
In an Erd\H{o}s-R\'{e}nyi graph on n vertices with probability $p$ of a given directed arc, the probability of obtaining a graph for which 
$\displaystyle{\frac{\textbf{X}^*}{\textbf{Y}^*} \ge r}$ in an FAS solution is no larger than $n!$ times the probability of obtaining a graph with an adjacency matrix satisfying $\displaystyle{\frac{\textbf{X}}{\textbf{Y}} \ge r}$ or: $$Pr\left(\frac{\textbf X^*}{\textbf Y^*} \ge r\right) \le n!Pr\left(\frac{\textbf X}{\textbf Y} \ge r\right). $$
\end{lemma}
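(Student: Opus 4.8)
The plan is to recast the FAS-optimal quantities in terms of vertex permutations and then apply a union bound, exploiting the fact that each fixed permutation preserves the joint law of the above- and below-diagonal counts. First I would fix the underlying randomness, i.e. the iid Bernoulli$(p)$ entries populating the off-diagonal positions of the adjacency matrix. For each permutation $\sigma$ of the $n$ vertices, let $A_\sigma$ denote the adjacency matrix obtained by renumbering the vertices according to $\sigma$, and let $X_\sigma$ and $Y_\sigma$ be the numbers of ones above and below the diagonal of $A_\sigma$. There are $n!$ such permutations. Since renumbering leaves the total arc count $X_\sigma+Y_\sigma$ unchanged, solving the FAS problem---minimizing the number of feedback arcs $Y_\sigma$---is the same as choosing the permutation $\sigma^*$ that maximizes $X_\sigma$, and hence maximizes the ratio $X_\sigma/Y_\sigma$. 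Thus $(\mathbf{X}^*,\mathbf{Y}^*)=(X_{\sigma^*},Y_{\sigma^*})$ with $\sigma^*$ one of the $n!$ permutations.

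Next I would record the key containment. Because $\sigma^*$ is itself a permutation, the event $\{\mathbf{X}^*/\mathbf{Y}^*\ge r\}$ is contained in the union $\bigcup_{\sigma}\{X_\sigma/Y_\sigma\ge r\}$ over all $n!$ permutations (in fact, by the maximality just noted the two events coincide, but only the easy inclusion is needed). The union bound then gives
$$\Pr\!\left(\frac{\mathbf{X}^*}{\mathbf{Y}^*}\ge r\right)\le\sum_{\sigma}\Pr\!\left(\frac{X_\sigma}{Y_\sigma}\ge r\right).$$

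The crux is to show that for each fixed $\sigma$ the pair $(X_\sigma,Y_\sigma)$ has the same joint law as $(\mathbf{X},\mathbf{Y})$. A fixed $\sigma$ sends each off-diagonal cell $(i,j)$ to $(\sigma(i),\sigma(j))$, a bijection of the $n^2-n$ off-diagonal positions onto themselves; among the two cells $(i,j)$ and $(j,i)$ of each unordered pair exactly one lands above the diagonal, so precisely $K=\binom{n}{2}$ cells map above and the remaining $K$ below. Since the entries are iid Bernoulli$(p)$, relabeling them this way leaves their joint distribution unchanged, so $X_\sigma$ and $Y_\sigma$ are independent $B(K,p)$ variables---exactly the distribution of $\mathbf{X}$ and $\mathbf{Y}$. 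Consequently each summand equals $\Pr(\mathbf{X}/\mathbf{Y}\ge r)$, and summing the $n!$ identical terms yields the claimed bound.

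I expect the main (conceptual) obstacle to be this distributional invariance step: one must verify carefully that conjugating by a permutation genuinely preserves the joint law of the above/below-diagonal counts---that the action is a bijection splitting the off-diagonal cells into two equal halves and that arc directions are tracked correctly. Everything else is a routine union bound, whose looseness (the factor $n!$) is harmless because in the intended application the tail probability $\Pr(\mathbf{X}/\mathbf{Y}\ge r)$ will decay fast enough to absorb it.
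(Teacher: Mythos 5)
Your proposal is correct and follows essentially the same route as the paper: the event $\{\mathbf{X}^*/\mathbf{Y}^*\ge r\}$ is contained in the union over all $n!$ vertex permutations of the corresponding event for the permuted matrix, and since permuting the vertices preserves the joint law of the above/below-diagonal counts, the union bound gives the factor $n!$. Your write-up is in fact more explicit than the paper's about the distributional-invariance step (the bijection of off-diagonal cells splitting $K$/$K$ above and below the diagonal), which the paper leaves implicit.
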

\begin{proof}
The adjacency matrices A for which $\displaystyle{\frac{X^*}{Y^*} \ge r}$ consists precisely of all the permutations of the adjacency matrices in the set $E^*_r$. There are $n!$ such permutations. The set of adjacency matrices $\displaystyle{\{A:\frac{\textbf{X}^*}{\textbf{Y}^*} \ge r\}}$ is contained in the set all permutations of the matrices in the set $\displaystyle{\{A:\frac{\textbf{X}}{\textbf{Y}} \ge r\}}$, since $E^*_r\subset E_r$. Hence 
$$Pr\left(\frac{\textbf{X}^*}{\textbf{Y}^*} \ge r\right) \le n!Pr\left(\frac{\textbf{X}}{\textbf{Y}} \ge r\right).$$
\end{proof}

One can at this point, give a simple argument based on tails of the binomial distribution, to show that $$Pr\left(\frac{\textbf{X}}{\textbf{Y}} \ge 1+\epsilon\right)=O(\exp(-C_{p,\epsilon} K))$$
for some constant $C_{p,\epsilon}>0$ that depends on the fixed constants $p>0,\epsilon>0$ and is independent of $n$. Since $\displaystyle{K=\frac{n^2-n}{2}=O(n^2)}$ we would have in that case for any fixed $\epsilon,p$  
$$Pr\left(\frac{\textbf{X}^*}{\textbf{Y}^*} \ge 1+\epsilon\right) \le n!Pr\left(\frac{\textbf{X}}{\textbf{Y}} \ge 1+\epsilon\right) \le n!O(\exp(-C_{p,\epsilon} K)) \rightarrow 0 \text { as } n \rightarrow \infty$$ since $n!=O(\exp(n\log n)) \text{ and } K=O(n^2)$. The argument to follow is simply the observation that $\textbf{X}$, as the sum of $K$ Bernoulli random variables, has very small probability of being larger than  $E[\textbf{X}](1+\delta)$ and similarly $\textbf{Y}$ is smaller than $E[\textbf{Y}](1-\delta)$ with very small probability. This is presented in the proof of Theorem 2.3 below. 

Tails of the binomial distribution can be conveniently estimated by applying the Hoeffding inequality [Ho].
\begin{theorem}
    (Hoeffding, 1963) Let $\textbf{X}_1,..,\textbf{X}_n$ be independent random variables satisfying $a_i \le \textbf{X}_i \le b_i$ almost surely. Set $\textbf{S}_n=\textbf{X}_1+...+\textbf{X}_n$. Then
    $$Pr(\textbf{S}_n-E[\textbf{S}_n] \ge t) \le \exp \left(-\frac{2t^2}{\sum_{i=1}^{n} (b_i - a_i)^2}\right).$$
\end{theorem}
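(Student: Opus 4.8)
The plan is to prove the bound by the exponential moment (Chernoff) method, which converts the tail probability into a statement about the moment generating function and then optimizes over a free parameter. First I would introduce a parameter $s>0$ and apply Markov's inequality to the exponentiated deviation: since $x\mapsto e^{sx}$ is increasing,
$$Pr(\textbf{S}_n - E[\textbf{S}_n] \ge t) = Pr\left(e^{s(\textbf{S}_n - E[\textbf{S}_n])} \ge e^{st}\right) \le e^{-st}\, E\!\left[e^{s(\textbf{S}_n - E[\textbf{S}_n])}\right].$$
Using independence of the $\textbf{X}_i$, the moment generating function factors as $E[e^{s(\textbf{S}_n - E[\textbf{S}_n])}] = \prod_{i=1}^n E[e^{s(\textbf{X}_i - E[\textbf{X}_i])}]$, so it suffices to control each centered factor separately.

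The crux is a bound on the moment generating function of a single bounded, mean-zero variable, namely Hoeffding's lemma: if $\textbf{Y}$ satisfies $a \le \textbf{Y} \le b$ almost surely and $E[\textbf{Y}]=0$, then $E[e^{s\textbf{Y}}] \le \exp(s^2(b-a)^2/8)$. I would establish this by convexity: for $y\in[a,b]$ write $y = \lambda b + (1-\lambda)a$ with $\lambda = (y-a)/(b-a)$, so that $e^{sy} \le \lambda e^{sb} + (1-\lambda)e^{sa}$ by convexity of the exponential. Taking expectations and using $E[\textbf{Y}]=0$ removes the linear terms, leaving a function of $s$ that can be written as $\exp(\phi(u))$ with $u = s(b-a)$; a short computation bounding $\phi''$ by $1/4$ and Taylor expanding to second order with Lagrange remainder yields the stated exponent $s^2(b-a)^2/8$.

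Applying the lemma to the centered variables $\textbf{Y}_i = \textbf{X}_i - E[\textbf{X}_i]$, which still lie in an interval of length $b_i - a_i$, gives $E[e^{s(\textbf{S}_n - E[\textbf{S}_n])}] \le \exp\!\left(\tfrac{s^2}{8}\sum_{i=1}^n (b_i-a_i)^2\right)$ and hence
$$Pr(\textbf{S}_n - E[\textbf{S}_n] \ge t) \le \exp\!\left(-st + \tfrac{s^2}{8}\sum_{i=1}^n (b_i-a_i)^2\right).$$
The final step is to optimize the exponent over $s>0$: the right-hand side is minimized at $s = 4t\big/\sum_{i=1}^n (b_i-a_i)^2$, and substituting this value produces exactly $\exp\!\left(-2t^2\big/\sum_{i=1}^n (b_i-a_i)^2\right)$.

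I expect the main obstacle to be Hoeffding's lemma rather than the overall scheme. The Chernoff step and the final optimization are routine, but obtaining the sharp constant $1/8$ in the single-variable bound, which is precisely what yields the clean factor of $2$ in the final exponent, requires the convexity-plus-Taylor argument to be carried out carefully, in particular the uniform bound of $1/4$ on the second derivative of the relevant log-moment-generating function. A cruder estimate of the single-variable moment generating function would still give exponential decay but with a worse constant, so the care is entirely in preserving the optimal rate.
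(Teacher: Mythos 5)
Your proposal is a correct and complete outline of the standard proof of Hoeffding's inequality: Markov's inequality applied to $e^{s(\textbf{S}_n-E[\textbf{S}_n])}$, factorization by independence, Hoeffding's lemma $E[e^{s\textbf{Y}}]\le \exp\left(s^2(b-a)^2/8\right)$ for a bounded mean-zero variable via convexity and a second-order Taylor bound on the log-moment-generating function, and optimization at $s=4t\big/\sum_{i=1}^n(b_i-a_i)^2$. Note, however, that the paper does not prove this statement at all; it quotes it as a classical result with a citation to Hoeffding's 1963 article and uses it as a black box in the proof of Theorem 2.3, so there is no in-paper argument to compare against. Your sketch is essentially Hoeffding's original argument, and your identification of the sharp constant $1/8$ in the single-variable lemma as the only delicate point is accurate.
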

We can now obtain our first theorem: 
\begin{theorem}
    Let $0<p<1,\epsilon>0$ be fixed and define $r=1+\epsilon$. In a directed Erd\H{o}s-R\'{e}nyi random graph with $n$ vertices, let $\textbf{X}^*$ be the number of feedforward arcs and let $\textbf{Y}^*$ be the number of feedback arcs identified in the FAS solution. Then for any such $p,\epsilon$ we have $$Pr\left( \frac{\textbf{X}^*}{\textbf{Y}^*} \ge r \right) \rightarrow 0  \text{ as } n \rightarrow \infty.$$
\end{theorem}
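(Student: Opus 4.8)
The plan is to combine the permutation bound of Lemma 2.1 with a concentration estimate for the unoptimized ratio $\textbf{X}/\textbf{Y}$, exploiting the fact that the exponential decay of the latter (in $K \sim n^2/2$) overwhelms the factorial factor $n!$. The whole argument therefore reduces to establishing the bound $Pr(\textbf{X}/\textbf{Y} \ge r) = O(\exp(-C_{p,\epsilon}K))$ announced in the text, and then checking that $n!$ cannot catch up.

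First I would translate the ratio event into two one-sided deviation events. Fix $\delta > 0$ small enough that $\frac{1+\delta}{1-\delta} < 1+\epsilon = r$, which is possible since the left-hand side tends to $1$ as $\delta \to 0$. Since $\textbf{X}$ and $\textbf{Y}$ are each $B(K,p)$ with mean $Kp$, whenever $\textbf{X} \le Kp(1+\delta)$ and $\textbf{Y} \ge Kp(1-\delta)$ hold simultaneously we get $\textbf{X}/\textbf{Y} \le \frac{1+\delta}{1-\delta} < r$. Contrapositively,
$$\left\{\frac{\textbf{X}}{\textbf{Y}} \ge r\right\} \subseteq \big\{\textbf{X} \ge Kp(1+\delta)\big\} \cup \big\{\textbf{Y} \le Kp(1-\delta)\big\}.$$
The convention that $Y=0$ makes the ratio event true is harmless here, since $Y=0$ already lies in the second set on the right.

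Next I would apply Hoeffding's inequality (Theorem 2.2) to each piece. Writing $\textbf{X} = \sum_{j>i}\textbf{x}_{ij}$ as a sum of $K$ Bernoulli variables bounded in $[0,1]$, so that $\sum_i (b_i-a_i)^2 = K$, and taking $t = Kp\delta$ gives
$$Pr\big(\textbf{X} \ge Kp(1+\delta)\big) \le \exp\big(-2Kp^2\delta^2\big),$$
and applying the same inequality to the lower tail of $\textbf{Y}$ (equivalently, the upper tail of $-\textbf{Y}$) yields the identical estimate for $Pr(\textbf{Y} \le Kp(1-\delta))$. A union bound then produces
$$Pr\left(\frac{\textbf{X}}{\textbf{Y}} \ge r\right) \le 2\exp\big(-2Kp^2\delta^2\big) = O\big(\exp(-C_{p,\epsilon}K)\big),\qquad C_{p,\epsilon} = 2p^2\delta^2 > 0,$$
with $C_{p,\epsilon}$ independent of $n$.

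Finally I would feed this into Lemma 2.1 and let the two exponential orders compete. Using $n! \le \exp(n\log n)$ and $K = (n^2-n)/2$,
$$Pr\left(\frac{\textbf{X}^*}{\textbf{Y}^*} \ge r\right) \le n!\,Pr\left(\frac{\textbf{X}}{\textbf{Y}} \ge r\right) \le 2\exp\big(n\log n - C_{p,\epsilon}K\big) \longrightarrow 0,$$
since $K \sim n^2/2$ forces the quadratic term to dominate $n\log n$ as $n\to\infty$. The step I expect to carry the real weight is not any single estimate but the order competition in this last line: the union bound of Lemma 2.1 over all $n!$ vertex relabelings is affordable \emph{precisely because} each relabeling's ratio is controlled by a sum of $\Theta(n^2)$ independent Bernoulli variables, giving Gaussian-type decay in $n^2$ that comfortably dwarfs the $\exp(n\log n)$ growth of the factorial. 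The only delicate bookkeeping is the choice of $\delta$ converting the multiplicative ratio into additive deviations, and the handling of the $Y=0$ convention, both of which are routine.
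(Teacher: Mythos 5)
Your proposal is correct and follows essentially the same route as the paper: Hoeffding's inequality applied to the upper tail of $\textbf{X}$ and the lower tail of $\textbf{Y}$, a union bound to control $Pr(\textbf{X}/\textbf{Y}\ge r)$ by $2\exp(-cK)$, and Lemma 2.1 with the observation that $\exp(n\log n)$ loses to $\exp(-cn^2)$. The only cosmetic differences are that you retain the factor $2$ in the Hoeffding exponent and explicitly dispose of the $Y=0$ convention, whereas the paper picks the specific value $\delta=\epsilon/(2+\epsilon)$ to make $\frac{1+\delta}{1-\delta}=1+\epsilon$ exactly; neither affects the argument.
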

\begin{proof}
Applying the Hoeffding inequality we can estimate for some small $\delta>0$:
$$Pr(\textbf{X} \ge Kp(1+\delta)) \le \exp \left(-\frac{(Kp \delta)^2}{K}\right)=\exp(-p^2 \delta^2 K)$$
and then applying the same inequality to $-\textbf{Y}$ we can similarly obtain:
$$Pr(\textbf{Y} \le Kp(1-\delta)) \le \exp(-\frac{(Kp \delta)^2}{K}=\exp(-p^2 \delta^2 K)$$
from which it follows
$$Pr(\textbf{X} < Kp(1+\delta)) >1-\exp(-p^2 \delta^2 K)$$
$$Pr(\textbf{Y} > Kp(1-\delta)) >1-\exp(-p^2 \delta^2 K)$$
$$Pr\left(\frac{\textbf{X}}{\textbf{Y}} <\frac{1+\delta}{1-\delta}\right) \ge Pr\left((\textbf{X} < Kp(1+\delta))\cap (\textbf{Y} > Kp(1-\delta))\right)> 1-2\exp(-p^2 \delta^2 K).$$
And so, taking complements, and choosing $\displaystyle{\delta=\frac{\epsilon}{2+\epsilon}}$ so that $\displaystyle{\frac{1+\delta}{1-\delta}=1+\epsilon}$ we have  
\begin{equation}
Pr\left(\frac{\textbf{X}}{\textbf{Y}}  \ge 1+\epsilon\right)  \le  2\exp\left(-p^2 \left(\frac{\epsilon}{2+\epsilon}\right)^2 K\right).
\end{equation}

From Lemma 2.1 we have $$Pr\left( \frac{\textbf{X}^*}{\textbf{Y}^*} \ge r \right) \le n!Pr\left( \frac{\textbf{X}}{\textbf{Y}} \ge r \right)\le n!2\exp \left(-p^2 \left(\frac{\epsilon}{2+\epsilon}\right)^2 K\right) \rightarrow 0 \text{ as } n \rightarrow \infty $$
using $\displaystyle{K=\frac{n^2-n}{2}}$. 
\end{proof}

\section{Large deviation results with $p\rightarrow 0 \text{ as } n \rightarrow \infty$}

If we reformulate the scalar condition $\displaystyle{\frac{X}{Y} \ge r}$ as $X-rY \ge 0$ and apply it to to our random variables, then $E[\textbf{X}-r\textbf{Y}]=-Kp\epsilon$ and we can re-express $\textbf{X}-r\textbf{Y} \ge 0$ as $(\textbf{X}-r\textbf{Y})-E[\textbf{X}-r\textbf{Y}] \ge -E[\textbf{X}-r\textbf{Y}]$. Such an event is a special case of a large deviation, where for a general random variable $\textbf{U}$ we attempt to bound $Pr(\textbf{U} \ge E[\textbf{U}](1+t))$ in terms of $t$. 

If we are to allow $p$ to vary with $n$ we should try to obtain sharper inequalities on $Pr(\textbf{X}-r\textbf{Y} \ge 0)$. In this regard there are a number of other large deviation inequalities one might apply. Here we will use a version of Bennett's inequality [Be] in a recent paper by Boucheron et. al. [Bou]:

\begin{theorem}
    (Bennett, 1962) Let $\textbf{X}_1,\textbf{X}_2,..,\textbf{X}_n$ be mean-zero independent random variables satisfying $\textbf{X}_i \leq 1$. Define $\sigma_i^2=Var(\textbf{X}_i)$ and $\sigma^2=\sum_{i=1}^{n}\sigma_i^2$. Let $\textbf{S}=\sum_{i=1}^{n} \textbf{X}_i$. Then  for any $t>0$ the following holds 
    $$Pr\left(\textbf{S} \ge t\right)\leq \exp\left(-\sigma^2h\left(\frac{t}{\sigma^2}\right)\right)$$
    where $h(u)=(1+u)\log(1+u)-u $. 
\end{theorem}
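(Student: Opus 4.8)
The plan is to establish Bennett's inequality through the classical Cram\'er--Chernoff exponential moment method, reducing the tail bound to a moment generating function estimate and then optimizing a single free parameter. First I would introduce $\lambda>0$ and combine the exponential Markov inequality with independence of the $\textbf{X}_i$:
$$Pr(\textbf{S}\ge t)\le e^{-\lambda t}\,E\left[e^{\lambda \textbf{S}}\right]=e^{-\lambda t}\prod_{i=1}^{n}E\left[e^{\lambda \textbf{X}_i}\right].$$
This converts the problem into controlling each factor $E[e^{\lambda \textbf{X}_i}]$ using only the two available hypotheses, $E[\textbf{X}_i]=0$ and $\textbf{X}_i\le 1$.

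The crux is a pointwise quadratic majorization of the exponential. I would prove that for each fixed $\lambda>0$ the map $\phi(x)=(e^{\lambda x}-1-\lambda x)/x^2$, extended continuously by $\phi(0)=\lambda^2/2$, is nondecreasing on the whole real line. After the rescaling $\phi(x)=\lambda^2\psi(\lambda x)$ with $\psi(y)=(e^{y}-1-y)/y^2$, this reduces to showing $\psi'\ge 0$, and the sign of $\psi'(y)$ equals that of $N(y)=(y-2)e^{y}+y+2$ divided by $y^{3}$, where the elementary facts $N(0)=N'(0)=0$ and $N''(y)=y e^{y}$ force $N(y)$ and $y^{3}$ to share a sign. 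Granting this monotonicity, the constraint $\textbf{X}_i\le 1$ gives $\phi(\textbf{X}_i)\le\phi(1)=e^{\lambda}-1-\lambda$, that is
$$e^{\lambda \textbf{X}_i}\le 1+\lambda \textbf{X}_i+(e^{\lambda}-1-\lambda)\,\textbf{X}_i^2.$$

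Taking expectations, the linear term vanishes since $E[\textbf{X}_i]=0$, leaving $E[e^{\lambda \textbf{X}_i}]\le 1+(e^{\lambda}-1-\lambda)\sigma_i^2\le\exp\left((e^{\lambda}-1-\lambda)\sigma_i^2\right)$, where the final step uses $1+u\le e^{u}$. Multiplying over $i$ collapses the product to $\exp\left((e^{\lambda}-1-\lambda)\sigma^2\right)$, so that
$$Pr(\textbf{S}\ge t)\le\exp\left(\sigma^2(e^{\lambda}-1-\lambda)-\lambda t\right).$$
It then remains to minimize the exponent over $\lambda>0$; differentiation gives the optimal choice $\lambda=\log(1+t/\sigma^2)$, and substituting $u=t/\sigma^2$ turns the exponent into exactly $-\sigma^2\big[(1+u)\log(1+u)-u\big]=-\sigma^2 h(t/\sigma^2)$, which is the claimed bound.

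I expect the main obstacle to be the monotonicity of $\phi$ (equivalently, the quadratic majorant of $e^{\lambda x}$ valid for all $x\le 1$): this is precisely where both hypotheses $\textbf{X}_i\le 1$ and mean-zero are exploited, and replacing this sharp majorant by a cruder one would only yield the weaker sub-Gaussian (Hoeffding-type) estimate rather than the Poisson-type rate function $h$. By contrast, the Chernoff step, the factorization over independent coordinates, and the final one-variable optimization are all routine once the moment generating function bound is in hand.
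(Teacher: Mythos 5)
The paper states this result as a citation (to Bennett 1962 and to Boucheron--Lugosi--Bousquet) and gives no proof of its own, so there is nothing internal to compare against. Your argument is a correct and complete rendition of the classical proof: the Chernoff/moment-generating-function reduction, the monotonicity of $x\mapsto (e^{\lambda x}-1-\lambda x)/x^2$ via the sign analysis of $N(y)=(y-2)e^{y}+y+2$ (which does check out, since $N(0)=N'(0)=0$ and $N''(y)=ye^{y}$ give $N$ the same sign as $y^{3}$), the resulting quadratic majorization $e^{\lambda x}\le 1+\lambda x+(e^{\lambda}-1-\lambda)x^{2}$ for $x\le 1$, and the optimization $\lambda=\log(1+t/\sigma^{2})$ yielding exactly $-\sigma^{2}h(t/\sigma^{2})$. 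This is essentially the same route the cited sources take, and it correctly isolates where the hypotheses $E[\textbf{X}_i]=0$ and $\textbf{X}_i\le 1$ enter.
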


If we apply this result to the problem at hand, we can obtain the inequalities in the following theorem: 
\begin{theorem}  If $p\le 1/2$ and $rp \le 1$ we have:
\begin{equation}
    Pr\left(\textbf{X}-r\textbf{Y} \ge 0\right)\leq \exp\left(-(1+r^2)Kpqh\left(\frac{\epsilon }{(1+r^2)q}\right)\right).
\end{equation}
\begin{equation}
Pr\left(\textbf{X}-r\textbf{Y} \ge 0\right)\leq \exp\left(-\frac{Kp\epsilon^2 }{2(1+r^2)q}(1+O(\epsilon))\right).
\end{equation}
\begin{proof}

In applying Bennett's theorem to the case of $\textbf{X}-r\textbf{Y}$ we have the sum of the following mean-zero random variables, with associated variances: 
$$\{\textbf{x}_{ij}-p\},\{-r(\textbf{y}_{ij}-p)\},Var(\textbf{x}_{ij}-p)=pq,Var(-r(\textbf{y}_{ij}-p))=r^2pq,\sigma^2=K(1+r^2)pq.$$
We have $\textbf{x}_{ij}-p \le 1-p \le 1, -r(\textbf{y}_{ij}-p) \le rp \le 1$ provided $rp\le 1$ which we assume for the purposes here. Then, we define $$\displaystyle{\textbf{S}=\sum_{i<j} (\textbf{x}_{ij}-p)-r\sum_{i>j} (\textbf{y}_{ij}-p)=\textbf{X}-r\textbf{Y}+Kp\epsilon}.$$ 
Applying Bennet's inequality to $Pr(\textbf{S} \ge Kp\epsilon)$ we have 
$$Pr(\textbf{S} \ge Kp\epsilon)=Pr(\textbf{X}-r\textbf{Y} \ge 0) \le \exp\left(-(1+r^2)Kpqh\left(\frac{\epsilon }{(1+r^2)q}\right)\right) $$
which is equation (4). Now when $u$ is small we have $h(u)=\frac{1}{2}u^2+O(u^3)$ so when $\epsilon$ is small and $p\leq 1/2$, with $rp \le 1$, we have 

$$Pr\{\textbf{X}-r\textbf{Y} \ge 0\}=Pr\{\textbf{S} \ge  Kp\epsilon\}\leq \exp\left(-\frac{Kp\epsilon^2 }{2(1+r^2)q}(1+O(\epsilon))\right)$$

\par\noindent which completes the proof. 
\end{proof}
\end{theorem}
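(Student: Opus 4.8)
The plan is to recast the ratio event $\textbf{X}/\textbf{Y} \ge r$ as a one-sided large-deviation statement about a single centered sum, and then feed that sum into Bennett's inequality (Theorem 3.1). First I would replace the condition $\textbf{X}/\textbf{Y} \ge r$ by the linear form $\textbf{X} - r\textbf{Y} \ge 0$, which avoids the division (and agrees with the $\textbf{Y}=0$ convention in the definition of $E_r$). Since $\textbf{X}$ and $\textbf{Y}$ are each $B(K,p)$ with $K=(n^2-n)/2$, and $r=1+\epsilon$, the mean of this combination is $E[\textbf{X}-r\textbf{Y}] = Kp - rKp = -Kp\epsilon$. Subtracting the mean, I would introduce the mean-zero sum $\textbf{S} = (\textbf{X}-r\textbf{Y}) - E[\textbf{X}-r\textbf{Y}] = \textbf{X}-r\textbf{Y}+Kp\epsilon$, so that the target event becomes exactly $\textbf{S}\ge Kp\epsilon$. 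This pins down the deviation level $t=Kp\epsilon$ to be plugged into Bennett.

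Next I would display $\textbf{S}$ as a sum of $2K$ independent, centered summands and check the hypotheses of Theorem 3.1. The summands are the $K$ variables $\textbf{x}_{ij}-p$ (for $i<j$) together with the $K$ variables $-r(\textbf{y}_{ij}-p)$ (for $i>j$). Their variances are $Var(\textbf{x}_{ij}-p)=pq$ and $Var(-r(\textbf{y}_{ij}-p))=r^2pq$, so the total variance is $\sigma^2 = K(1+r^2)pq$. The one delicate point is Bennett's \emph{one-sided} boundedness requirement $\textbf{X}_i \le 1$: the bound $\textbf{x}_{ij}-p \le 1-p \le 1$ is automatic, whereas $-r(\textbf{y}_{ij}-p) \le rp$ is at most $1$ only when $rp \le 1$. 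This is exactly why the hypotheses $p\le 1/2$ and $rp\le 1$ are imposed, and it is the asymmetry of Bennett — an upper bound on each summand rather than a symmetric bound — that makes the $r$-scaling of the $\textbf{Y}$-block the one ingredient requiring care.

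With the setup fixed, I would apply Theorem 3.1 with $t=Kp\epsilon$ and $\sigma^2 = K(1+r^2)pq$ to get
$$Pr(\textbf{X}-r\textbf{Y} \ge 0) = Pr(\textbf{S}\ge Kp\epsilon) \le \exp\!\left(-\sigma^2\, h\!\left(\frac{Kp\epsilon}{\sigma^2}\right)\right),$$
and substituting $\sigma^2$ collapses the argument of $h$ to $\epsilon/((1+r^2)q)$, which is precisely inequality (4). To pass to the cleaner form (5), I would Taylor-expand $h$ about the origin, using $h(u) = \tfrac12 u^2 + O(u^3) = \tfrac12 u^2(1+O(u))$, legitimate since $u = \epsilon/((1+r^2)q)$ is small when $\epsilon$ is small. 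Multiplying $\sigma^2 = K(1+r^2)pq$ by $\tfrac12 u^2(1+O(\epsilon))$ and simplifying the powers of $(1+r^2)$ and $q$ leaves the exponent $\tfrac{Kp\epsilon^2}{2(1+r^2)q}(1+O(\epsilon))$, giving (5).

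The genuine obstacle here is not the algebra but correctly respecting Bennett's one-sidedness: I must scale the two blocks of Bernoulli variables so that every summand obeys the uniform upper bound $\le 1$ while keeping the combination centered, and it is the estimate $-r(\textbf{y}_{ij}-p)\le rp$ that forces $rp\le 1$. Once this scaling is settled, the variance computation and the expansion of $h$ are routine.
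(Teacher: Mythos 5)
Your proposal is correct and follows essentially the same path as the paper's own proof: rewrite the event as $\textbf{S}\ge Kp\epsilon$ for the centered sum $\textbf{S}=\textbf{X}-r\textbf{Y}+Kp\epsilon$, verify the one-sided bound $-r(\textbf{y}_{ij}-p)\le rp\le 1$ (the role of the hypothesis $rp\le 1$), compute $\sigma^2=K(1+r^2)pq$, apply Bennett with $t=Kp\epsilon$ to get (4), and Taylor-expand $h$ to get (5). No substantive differences.
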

\par\noindent In comparing (5) with (3) when $p,\epsilon$ are small, we have $\displaystyle{p^2 (\frac{\epsilon}{2+\epsilon})^2\approx p^2\epsilon^2/4} $ versus $\displaystyle{\frac{p\epsilon^2 }{2(1+r^2)q}\approx p\epsilon^2/4}$ in the exponent. So when $p$ is small,the bound for $Pr(\textbf{X}-r\textbf{Y} \ge 0)$ is much sharper in (5). The next section provides an inequality which is yet a bit sharper than (4) and applies to any value of $r\ge 1$ and any $0 < p \le 1/2$. 
\section{A Direct Inequality for $Pr\left(\textbf{X}-r\textbf{Y}\ge 0 \right)$}
In this section we develop an inequality that directly addresses $Pr\left(\textbf{X}-r\textbf{Y} \ge 0 \right)$, as opposed to using one of the existing general inequalities. By treating this specific case we will obtain a result that is sightly sharper and more uniform in $p,\epsilon$ than the one in (4) derived from the more general Bennett inequality above. 
We aproach the problem directly using the standard Chernoff inequality and obtain the result below. 
\begin{theorem}
    For $0 < p \le 1/2 \text{ and } \epsilon \ge 0$ we have 
\begin{equation}
     Pr(\textbf{X}-r\textbf{Y}\ge 0) \le \exp \left(-\frac{p}{2q} \frac{K\epsilon^2}{1+r^2}\right) 
\end{equation}   
    where $q=1-p,r=1+\epsilon.$
\end{theorem}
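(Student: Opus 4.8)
The plan is to bound the tail probability by the exponential-moment (Chernoff) method, using that $\textbf{X}$ and $\textbf{Y}$ are \emph{independent} sums of $K$ i.i.d.\ Bernoulli($p$) variables. For any $\lambda>0$, Markov's inequality applied to $e^{\lambda(\textbf{X}-r\textbf{Y})}$, together with independence and the Bernoulli generating function $E[e^{\lambda\textbf{x}_{ij}}]=q+pe^{\lambda}$, gives
\[
Pr(\textbf{X}-r\textbf{Y}\ge 0)\le E\!\left[e^{\lambda\textbf{X}}\right]E\!\left[e^{-\lambda r\textbf{Y}}\right]=\big[(q+pe^{\lambda})(q+pe^{-r\lambda})\big]^{K}.
\]
Writing $P(\lambda)=(q+pe^{\lambda})(q+pe^{-r\lambda})$ and $H(\lambda)=\log P(\lambda)$, it suffices to exhibit one $\lambda>0$ with $H(\lambda)\le-\frac{p\epsilon^{2}}{2q(1+r^{2})}$; raising to the $K$-th power then produces exactly (6). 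Observe that, unlike the Bennett-based route, no constraint such as $rp\le1$ is needed here, since these generating functions are finite for every $\lambda$ --- this is the source of the promised greater uniformity in $p,\epsilon$.

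The natural choice is the minimizer of the second-order model of $H$. Since $H(0)=0$, $H'(0)=p-rp=-p\epsilon$, and $H''(0)=pq(1+r^{2})$, the quadratic $-p\epsilon\lambda+\tfrac12 pq(1+r^{2})\lambda^{2}$ is minimized at $\lambda_{0}=\frac{\epsilon}{q(1+r^{2})}$. I would substitute this value and control $H$ via Taylor's theorem with integral remainder,
\[
H(\lambda_{0})=-p\epsilon\lambda_{0}+\int_{0}^{\lambda_{0}}(\lambda_{0}-s)\,H''(s)\,ds ,
\]
so that the pointwise estimate $H''(s)\le pq(1+r^{2})$ on $[0,\lambda_{0}]$ would give $H(\lambda_{0})\le-p\epsilon\lambda_{0}+\tfrac12 pq(1+r^{2})\lambda_{0}^{2}=-\frac{p\epsilon^{2}}{2q(1+r^{2})}$, exactly the target. (The case $\epsilon=0$, $r=1$ is trivial, as the right-hand side of (6) is then $1$.)

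The hard part will be the second-derivative estimate $H''(s)=\phi''(s)+r^{2}\phi''(-rs)\le pq(1+r^{2})$, where $\phi(x)=\log(q+pe^{x})$ and $\phi''(x)=\frac{pqe^{x}}{(q+pe^{x})^{2}}$. The two terms cannot be bounded separately: for $p<\tfrac12$ the function $\phi''$ rises above its value $\phi''(0)=pq$ as $x$ moves to the right of $0$ (it climbs toward $1/4$), so the feed-forward factor alone violates the per-term quadratic bound. The resolution is that this increase of $\phi''(s)$ above $pq$ is dominated by the decrease of $r^{2}\phi''(-rs)$ below $r^{2}pq$, and this is precisely where both hypotheses enter, through $q-p\ge0$ (from $p\le\tfrac12$) and $r\ge1$. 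Concretely, I would first establish the clean one-sided estimate $\phi''(x)\le pq\,e^{(q-p)x}$ for all $x$ --- which follows because $\frac{d}{dx}\log\phi''(x)=\frac{q-pe^{x}}{q+pe^{x}}$ is decreasing and equals $q-p$ at $x=0$ --- thereby reducing the required inequality to $e^{(q-p)s}+r^{2}e^{-r(q-p)s}\le1+r^{2}$. The left-hand side equals $1+r^{2}$ at $s=0$ with non-positive derivative $(q-p)(1-r^{3})$ there and is convex, so it remains below $1+r^{2}$ on an interval $[0,s^{**}]$; the crux, which I expect to be the most delicate step, is then to verify $\lambda_{0}\le s^{**}$, i.e.\ $e^{(q-p)\lambda_{0}}+r^{2}e^{-r(q-p)\lambda_{0}}\le1+r^{2}$, a comparison that must be checked carefully by relating $(q-p)\lambda_{0}=\frac{(q-p)\epsilon}{q(1+r^{2})}$ to $r=1+\epsilon$ uniformly over $0<p\le\tfrac12$ and $\epsilon\ge0$.
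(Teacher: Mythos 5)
Your setup coincides with the paper's own: both arguments apply the Chernoff bound with $E[e^{t(\textbf{X}-r\textbf{Y})}]=[(q+pe^{t})(q+pe^{-rt})]^{K}$, evaluate at the same point $t=\lambda_{0}=\epsilon/(q(1+r^{2}))$ (the minimizer of the quadratic Taylor model), and reduce the theorem to the single inequality $H(\lambda_{0})\le -\frac{p\epsilon^{2}}{2q(1+r^{2})}$. Where you genuinely diverge is in how that inequality is established. The paper compactifies the parameter domain via $r=1/(1-s)$, divides $T_{2}(t_{min})-F(t_{min})$ by its vanishing orders $p$ and $s^{4}$ at the boundary, and then verifies positivity of the rescaled function \emph{numerically} on a grid of spacing $0.01$, supported by a surface plot; your route is an analytic one via Taylor's theorem with integral remainder and the pointwise bound $H''(s)\le pq(1+r^{2})$ on $[0,\lambda_{0}]$. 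Your reductions are all sound: $\phi''(x)\le pq\,e^{(q-p)x}$ holds for every $x$ since $(\log\phi'')'$ is decreasing and equals $q-p$ at $0$, you correctly identify that the two terms of $H''$ cannot be bounded separately, and by convexity of $g(s)=e^{(q-p)s}+r^{2}e^{-r(q-p)s}$ with $g(0)=1+r^{2}$ it suffices to check the endpoint $s=\lambda_{0}$. The one step you leave open is true and closes cleanly: $u_{0}:=(q-p)\lambda_{0}=\frac{\epsilon}{1+r^{2}}(1-p/q)$ is largest in the limit $p\to 0$, and since the sublevel set $\{u\ge 0: g\le 1+r^{2}\}$ is an interval containing $0$, it suffices to treat $v:=\epsilon/(1+r^{2})$; then the elementary bounds $e^{v}-1\le v e^{v}$ and $1-e^{-rv}\ge rv e^{-rv}$ reduce the claim to $e^{(1+r)v}\le r^{3}$, i.e.\ $\frac{r^{2}-1}{r^{2}+1}\le 3\log r$, which follows from $\log r\ge 1-1/r$ together with $2r^{2}-r+3>0$. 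With that supplied, your argument is complete and is in fact more rigorous than the paper's, whose key positivity claim rests on a finite grid computation; what the paper's approach buys in exchange is a quantitative picture of the gap $T_{2}(t_{min})-F(t_{min})$ over the whole parameter domain (including the location and value $1/48$ of the minimum of the rescaled difference), which your endpoint argument does not exhibit.
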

    
Remark: If we consider mean-zero random variables, we have 
 $$Pr(\textbf{X}-r\textbf{Y} \ge 0)=Pr((\textbf{X}-Kp)-r(\textbf{Y}-Kp) \ge Kp\epsilon) $$ 
 $$Var((\textbf{X}-Kp)-r(\textbf{Y}-Kp))=Kpq(1+r^2)=\sigma^2$$ so that the
 exponential in equation (6) has argument $\displaystyle{-\frac{(Kp\epsilon)^2}{2\sigma^2}}$.
\begin{proof}
We use the Chernoff bound -
$$Pr(\textbf{U}\ge a) \le e^{-at}E(\exp(t\textbf{U}))$$
where we identify $\textbf{U}$ as $\textbf{X}-r\textbf{Y}$ and $a=0$. The value of $t$ is chosen to obtain a good bound on the right hand side. 
We have then $$E(\exp(t(\textbf{X}-r\textbf{Y})))=E(\exp(t\textbf{X}))E(\exp(-tr\textbf{Y}))=[(q+pe^t)(q+pe^{-rt})]^K.$$
Since we are looking for exponential bounds we define $F(t)=\log[(q+pe^t)(q+pe^{-rt})]$ so that $E(\exp(t(\textbf{X}-r\textbf{Y})))=\exp(KF(t))$ and then find an upper bound for the minimum over $t$ of $F(t)$.

\noindent We consider the quadratic Taylor polynomial of $F(t)$ about $t=0$, denoted $T_2(t)$:
$$T_2(t)=-(p\epsilon)t+\frac{1}{2} pq(1+r^2)t^2.$$
The minimum of $T_2(t)$ occurs at $\displaystyle{t_{min}=\frac{\epsilon}{q(1+r^2)}}$ and we have $\displaystyle{T_2(t_{min})=-\frac{1}{2} \frac{p}{q} \frac{\epsilon^2}{1+r^2}}$. We will show that $F(t_{min}) \le T_2(t_{min})$ and the result will follow from that.
The function $T_2(t_{min})-F(t_{min})$ is defined on the semi-infinite rectangular domain $(p,r) \in [0,1/2]\times [1,\infty)$. We compactify the domain by defining     $\displaystyle{r=\frac{1}{1-s}}$ for $0 \le s\le 1$, observing that $R(p,s) \equiv T_2(t_{min})-F(t_{min})$ is defined and continuous on the domain $(p,s) \in [0,1/2]\times [0,1]$. We observe that when $p=0$, $q=1$ so $T_2(t) \equiv 0$ and $F(t)=\log(1)=0$ so $R(0,s)=0$ and when $s=0$ we have $r=1,\epsilon=0$ so $t_{min}=0$ and $R(p,0)=0$. We will divide $R(p,s)$ by the leading terms in $p,s$.
We have $\displaystyle{r=\frac{1}{1-s},\epsilon=r-1=\frac{s}{1-s}}$ which leads to
\begin{align}
    \frac{1}{p}[T_2(t_{min})-F(t_{min})]&=-\frac{1}{2(1-p)}\frac{s^2}{(1-s)^2+1} \nonumber\\& 
    -\frac{1}{p}\log\left(1-p+p\exp\left(\frac{s(1-s)}{(1-p)((1-s)^2+1)}\right)\right)\nonumber\\ & 
    -\frac{1}{p}\log\left(1-p+p\exp\left(-\frac{s}{(1-p)((1-s)^2+1)}\right)\right) \nonumber
\end{align}
and so 
\begin{align}
    \lim_{p \to 0}\frac{1}{p}[T_2(t_{min})-F(t_{min})]&=-\frac{1}{2}\frac{s^2}{(1-s)^2+1}\nonumber\\& 
    -\exp\left(\frac{s(1-s)}{((1-s)^2+1)}\right)-\exp\left(-\frac{s}{((1-s)^2+1)}\right)+2. \nonumber
\end{align}
The point here is that this limit exists and (as we will verify later) is strictly positive on the side of the rectangular domain given by $p=0,0 < s\le1$ and that $\displaystyle{\frac{1}{p}[T_2(t_{min})-F(t_{min})]}$ is defined and continuous on the domain. Similarly we develop the behavior near $s=0$:
$$\frac{1}{p} [T_2(t_{min})-F(t_{min})]=s^4\frac{18p^2-30p+11}{192(1-p)^3}+O(s^5) $$
If we now consider the function $\displaystyle{\frac{1}{ps^4} \left[T_2(t_{min})-F(t_{min})\right]} $, this function is differentiable on the compact domain. By factoring out the local behavior where the function is zero on the boundary, we have obtained a function that can be directly verified as strictly positive, via computation (using points spaced by .01 in $p$ and in $s$). A plot of the surface appears in Figure 1. The minimum occurs at $p=.5,s=0$ and is equal to $\displaystyle{\frac{1}{48}}$. The bounding curves are plotted in red. 
\begin{figure}[htp]
    \centering
    \includegraphics[width=15cm]{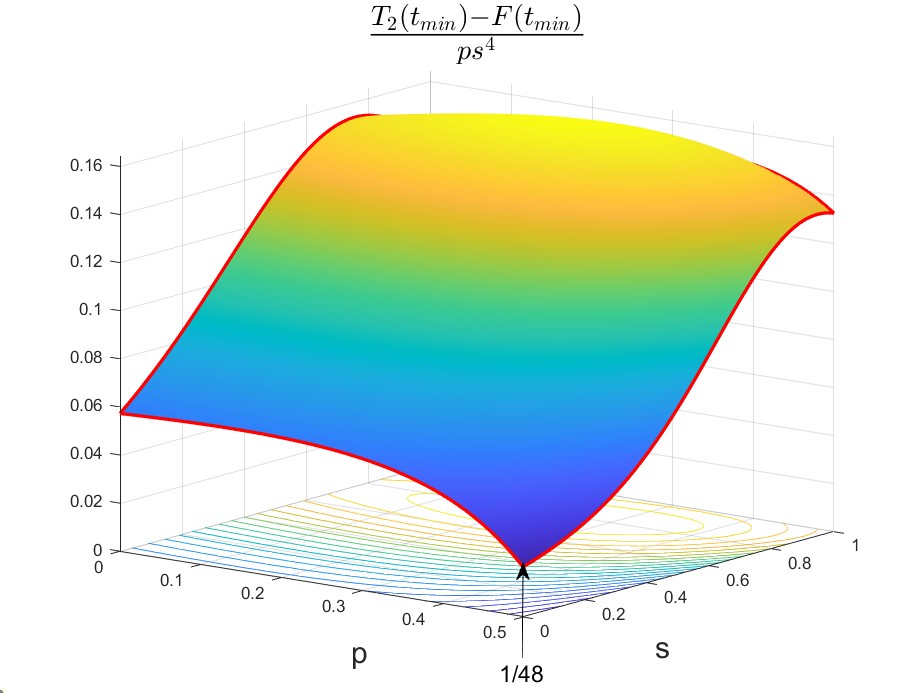}
    \caption{Surface of the scaled difference on domain}
    \label{fig:galaxy}
\end{figure}

This completes the proof. 
\end{proof}
We now can apply the uniform inequality
$$ Pr(\textbf{X}-r\textbf{Y} \ge 0) \le \exp \left(-\frac{p}{2q} \frac{K\epsilon^2}{1+r^2}\right) $$
for $0 \le p \le 1/2$ and any $\epsilon \ge 0$. Using Lemma 2.1 and the fact that $ n!=\exp(n \log n) o(1)$ we can obtain the simple result:
\begin{theorem}
    For any fixed $\epsilon>0$ if $\displaystyle{C\frac{\epsilon^2}{4(1+(1+\epsilon)^2)}>1}$ then $\displaystyle{p=C\frac{\log  n}{n}}$ implies 
    $$\lim_{n \to \infty} Pr(\frac{\textbf X^*}{\textbf Y^*} \ge 1+\epsilon) =0.$$
\end{theorem}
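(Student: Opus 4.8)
The plan is to chain together the two inequalities the paper has already established, Lemma 2.1 and Theorem 4.1, and then reduce the statement to a comparison of two competing quantities that both live at the scale $n\log n$. The first step is to note that the ratio condition and the linear condition coincide: since $r>0$, the event $\{\textbf X/\textbf Y\ge r\}$ is exactly the event $\{\textbf X-r\textbf Y\ge 0\}$, the boundary case $\textbf Y=0$ matching because $\textbf X-r\cdot 0=\textbf X\ge 0$ always holds, consistent with the convention that $\textbf X/\textbf Y\ge r$ is declared true when $\textbf Y=0$. Hence $Pr(\textbf X/\textbf Y\ge r)=Pr(\textbf X-r\textbf Y\ge 0)$, and Theorem 4.1 applies directly to the right-hand factor produced by Lemma 2.1.

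Next I would combine the estimates. For $n$ large enough that $p=C\log n/n\le 1/2$ (which holds eventually, since $p\to 0$), Theorem 4.1 is in force, so
\[
Pr\left(\frac{\textbf X^*}{\textbf Y^*}\ge r\right)\le n!\,Pr(\textbf X-r\textbf Y\ge 0)\le n!\,\exp\left(-\frac{p}{2q}\frac{K\epsilon^2}{1+r^2}\right).
\]
Taking logarithms reduces the problem to showing that the right-hand side tends to $-\infty$. Using Stirling in the form $\log n!=n\log n\,(1+o(1))$ supplied in the excerpt, the task becomes to show that the tail exponent strictly dominates $n\log n$.

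The heart of the argument is then a leading-order evaluation of the tail exponent. Substituting $p=C\log n/n$, $q=1-p\to 1$, and $K=(n^2-n)/2=\tfrac{n^2}{2}(1+o(1))$, and recalling $r=1+\epsilon$, I would compute
\[
\frac{p}{2q}\frac{K\epsilon^2}{1+r^2}=\frac{C\epsilon^2}{4\,(1+(1+\epsilon)^2)}\,n\log n\,(1+o(1))=\alpha\, n\log n\,(1+o(1)),
\]
where $\alpha:=\dfrac{C\epsilon^2}{4(1+(1+\epsilon)^2)}$ is exactly the quantity assumed to exceed $1$ in the hypothesis. Combining the two pieces gives
\[
\log Pr\left(\frac{\textbf X^*}{\textbf Y^*}\ge r\right)\le n\log n\,(1+o(1))-\alpha\,n\log n\,(1+o(1))=(1-\alpha)\,n\log n\,(1+o(1)).
\]
Since $\alpha>1$, the coefficient $1-\alpha$ is strictly negative, so the right side diverges to $-\infty$ and the probability tends to $0$, as claimed.

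I do not expect a genuine obstacle here: the content is a clean matching of two $\exp(\Theta(n\log n))$ factors, and the hypothesis $\alpha>1$ is precisely the threshold at which the Chernoff-type decay of Theorem 4.1 overwhelms the $n!$ union bound of Lemma 2.1. The only points requiring care are bookkeeping ones, namely verifying that $p\le 1/2$ holds eventually so that Theorem 4.1 may be invoked, and checking that the subleading corrections hidden in $q=1-p$ and in the $-n$ term of $K$ are absorbed into the $(1+o(1))$ factors rather than altering the leading constant $\alpha$. Because the hypothesis is a \emph{strict} inequality, this slack is automatically available, so no sharp tracking of lower-order terms is needed.
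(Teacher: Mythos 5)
Your proposal is correct and follows exactly the route the paper intends: the paper states this theorem as an immediate consequence of Lemma 2.1, the uniform Chernoff bound of Theorem 4.1, and the asymptotic $\log n! \sim n\log n$, which is precisely your chain of estimates. Your leading-order evaluation of the exponent as $\alpha\, n\log n$ with $\alpha = C\epsilon^2/(4(1+(1+\epsilon)^2))$ and the observation that $\alpha>1$ is the exact threshold match the paper's intended argument, with the added (welcome) care of checking $p\le 1/2$ eventually and the $\textbf Y=0$ convention.
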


\section {Summary}
In contemplating the Minimal Feedback Arc Set problem, one might imagine that reordering the vertices of a directed graph would provide sufficient power so as to appreciably reduce the size of the feedback arc set relative to the feedforward arcs. Our results show that at least in the case of directed Erd\H{o}s-R\'{e}nyi random graphs, this is not at all the case for large random graphs with a fixed edge probability $p>0$: asymptotically, the ratio of feedforward to feedback arcs approaches 1, even when optimized. Our inequalities show that this remains true for graphs with $p=C\frac{\log n}{n}$: if we scale $C$ appropriately with  $\epsilon$ then the probability of a feedback ratio larger than $1+\epsilon$ similarly approaches zero asymptotically for large graphs.

\section{References}
[Be] G. Bennett. Probability inequalities for the sum of independent random variables. Amer. Stat. Assoc. J., 57:33–45, 1962. \par \noindent
[Bou] S. Boucheron, G. Lugosi, O. Bousquet, \textit{Concentration Inequalities}, in \textbf{Advanced Lectures on Machine Learning} , Springer 2004 \par \noindent
[Ha] Frank Harary, \textbf{Graph Theory}, Addison-Wesley 1969 \par \noindent
[Hef] Hefetz, Steger, Sudakov, \textit{Random Directed Graphs are Robustly Hamiltonian}, Random Struct. Alg., 49,345-362, 2016.  \par \noindent
[Ku] Robert Kudelić, \textbf{Feedback arc set - a history of the problem and algorithms}. Springer, 2022 \par \noindent
[Ho] Wassily Hoeffding. \textit{Probability inequalities for sums of bounded random variables} (PDF). Journal of the American Statistical Association. 58 (301): 13–30. (1963) doi:10.1080/01621459.1963.10500830.
\end{document}